\newtheorem{theorem}{Theorem}[section]
\newtheorem{lemma}[theorem]{Lemma}
\newtheorem{remark}[theorem]{Remark}
\theoremstyle{definition}
\newtheorem*{example}{Example}
\newcommand{\n}{\nabla}
\begin{document}

\title[Low-dimensional complete gradient Yamabe solitons]
{Classification of low-dimensional complete gradient Yamabe solitons}

\author{Shun Maeta}
\address{Department of Mathematics, Chiba University, 1-33, Yayoicho, Inage, Chiba, 263-8522, Japan.}
\curraddr{}
\email{shun.maeta@faculty.gs.chiba-u.jp~{\em or}~shun.maeta@gmail.com}
\thanks{The author is partially supported by the Grant-in-Aid for Scientific Research (C), No.23K03107, Japan Society for the Promotion of Science.}

\subjclass[2010]{53C21, 53C25, 53C20}

\dedicatory{}

\keywords{gradient Yamabe solitons; the Yamabe soliton version of the Perelman conjecture; Ricci solitons}

\commby{}

\begin{abstract}
In this paper, we completely classify nontrivial non-flat two- and three-dimensional complete gradient Yamabe solitons. 
\end{abstract}

\maketitle

\bibliographystyle{amsplain}

%%%%%%%%%%%%%%%%%%%%%%%%%%%%%%%%%%%
%%%%%%%%%%%%%%%%%%%%%%%%%%%%%%%%%%%
%%%%%%%%%%%%%%%%%%%%%%%%%%%%%%%%%%%

\section{Introduction}\label{intro}

The geometric flow is one of the most powerful tools for understanding the structure of Riemannian manifolds. 
In particular, the Yamabe flow is one of the central fields of the theory and has developed rapidly (cf. \cite{Brendle05}, \cite{Brendle07}). Gradient Yamabe solitons are self-similar solutions of the Yamabe flow and expected to be a singularity model. 
Therefore, the classification problem is one of the most important ones. To classify gradient Yamabe solitons, there are many studies with curvature assumptions and locally conformally flat conditions (cf. \cite{DS13}, \cite{CMM12}, \cite{CSZ12} and \cite{Maeta21}). These studies give affirmative partial answers to the Yamabe soliton version of the Perelman conjecture: {\it Under what conditions do nontrivial complete gradient shrinking, steady, and expanding Yamabe solitons with positive (scalar) curvature or with bounded (scalar) curvature become rotationally symmetric? In particular, what natural geometric assumptions, beyond positive or bounded (scalar) curvature, are required?} C. He resolved for steady case (cf. \cite{He11}).
For shrinking case, the author resolved the conjecture (cf. \cite{Maeta28}).

In order to gain a deeper understanding of gradient Yamabe solitons, we consider complete gradient Yamabe solitons without assuming any curvature assumptions.
In this paper, we completely classify nontrivial non-flat three-dimensional complete gradient Yamabe solitons. 
%In particular, we show that positive scalar curvature is necessary for the Yamabe soliton version of the Perelman conjecture.
We also provide examples of complete expanding gradient Yamabe solitons, which do not appear in the steady and shrinking cases.

The classification of two-dimensional complete gradient Ricci solitons was previously studied by Bernstein and Mettler \cite{BM15}, whose approach relies on specific two-dimensional Killing vector fields to reduce the problem to an ordinary differential equation. 

In this paper, we provide an essentially different proof from the broader perspective of Yamabe solitons. Instead of using Killing vector fields, we determine the warped product structure from a more general framework. This approach allows us to describe the geometric properties more explicitly. 
While expanding solitons in \cite{BM15} are classified mainly by their asymptotic behaviors, our theorem provides explicit bounds for the Gaussian curvature $K$ using the soliton constant $\lambda$, proving its strict monotonicity and specific ranges such as $\frac{\lambda}{2} < K < 0$, or $0 < K \le \frac{1+\lambda}{2}$, or $\frac{1+\lambda}{2}\leq K<0$.

\begin{remark}
The original Perelman conjecture \cite{Perelman1} is that any three-dimensional complete noncompact $\kappa$-noncollapsed steady gradient Ricci soliton with positive curvature is rotationally symmetric, which was proven by S. Brendle \cite{Brendle13} (see also \cite{Brendle14}$)$.
\end{remark}

%%%%%%%%%%%%
%%%%%%%%%%%%
%%%%%%%%%%%%
\section{Preliminary}\label{pre}

An $n$-dimensional Riemannian manifold $(M^n,g)$ is called a gradient Yamabe soliton if there exists a smooth function $F$ on $M$ and a constant $\lambda\in \mathbb{R}$, such that 
$\nabla \nabla F=(R-\lambda)g,$
where $\nabla\nabla F$ is the Hessian of $F$, and $R$ is the scalar curvature of $M$. 
If $F$ is constant, then $M$ is called trivial.
If $\lambda>0$, $\lambda=0$, or $\lambda<0$, then the Yamabe soliton is called shrinking, steady, or expanding, respectively.

Cao, Sun, and Zhang showed the structure theorem for Yamabe solitons (see also \cite{Tashiro65} and \cite{CMM12}).

\begin{theorem}[\cite{Tashiro65}, \cite{CSZ12}, \cite{CMM12}]\label{CSZ}
Let $(M,g,F)$ be a nontrivial, complete, gradient Yamabe soliton. 
Then $(M,g)$ is a complete warped product manifold and must take one of the two forms:

\begin{enumerate}
\item[$(1)$]
a warped product manifold
$([0,+\infty),dr^2)\times_{{F'(r)}^2}(\mathbb{S}^{n-1},{\bar g}_{S})$,
where $\bar g_{S}$ is the round metric on $\mathbb{S}^{n-1},$ or
\item[$(2)$]
a warped product manifold
$(\mathbb{R},dr^2)\times_{{F'(r)}^2} \left(N^{n-1},\bar g\right),$
where the scalar curvature $\bar R$ of $N$ is constant. If the scalar curvature of $M$ is nonnegative, then $
\bar R>0$ or $R=\bar R=0.$ 
\end{enumerate}
\end{theorem}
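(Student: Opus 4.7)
The plan is to work directly from the soliton identity $\nabla\nabla F = (R-\lambda)g$, without appealing to any auxiliary structure. Contracting with $\nabla F$ yields
\[\tfrac{1}{2}\nabla_i |\nabla F|^2 = F_{ij}\nabla^j F = (R-\lambda)\nabla_i F,\]
so on the regular set $\Omega=\{\nabla F\neq 0\}$ the gradient of $|\nabla F|^2$ is parallel to $\nabla F$, and hence $|\nabla F|^2$ is locally constant on each connected level set of $F$. Tracing the soliton equation gives $\Delta F=n(R-\lambda)$, and one further differentiation shows that $R$ itself is also constant on those level sets. Since $\nabla\nabla F$ is pointwise a scalar multiple of $g$, these level sets are furthermore totally umbilical in $M$, with second fundamental form proportional to the induced metric.

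Next I would fix a connected level hypersurface $N\subset\Omega$ and parametrize the orthogonal direction by arc length $r$ along the unit integral curves of $\nabla F/|\nabla F|$. Setting $\varphi(r):=|\nabla F|(r)=F'(r)$, which is well defined because $|\nabla F|$ is constant on each level, the totally umbilical structure forces the metric on $\Omega$ into the warped form $g|_\Omega = dr^2 + \varphi(r)^2\bar g$, where $\bar g$ is the induced metric on $N$. I then split into cases based on the critical set of $F$. If $F$ has no critical point, completeness forces $r\in\mathbb{R}$, giving case~(2). If $F$ attains a critical point $p_0$, then $\varphi(r)\to 0$ as $r\to 0^+$; smoothness of $g$ at $p_0$ forces $\varphi'(0)=1$ and pins $(N,\bar g)$ down to the standard round sphere $(\mathbb{S}^{n-1},\bar g_S)$. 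A second critical point is excluded in the nontrivial complete setting: two critical points would produce a compact warped product, and then integrating against $F$ the trace identity $\Delta F=n(R-\lambda)$ against nontriviality forces a contradiction.

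For case~(2), I invoke the standard warped product scalar curvature formula
\[R = \frac{\bar R}{\varphi^2} - 2(n-1)\frac{\varphi''}{\varphi} - (n-1)(n-2)\frac{(\varphi')^2}{\varphi^2}.\]
Since $R$ and all the $\varphi$-terms depend only on $r$, the fiber scalar curvature $\bar R$ must be constant on $N$. For the final sign claim, I rearrange to isolate $\bar R$ and combine with $F''(r)=R-\lambda$ and the hypothesis $R\geq 0$: an asymptotic analysis as $|r|\to\infty$, together with completeness of $(\mathbb{R},dr^2+\varphi^2\bar g)$, rules out $\bar R\leq 0$ except in the degenerate flat case $R\equiv \bar R\equiv 0$, producing the stated dichotomy $\bar R>0$ or $R=\bar R=0$.

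The main obstacle I anticipate is the smooth closing-up at a critical point in case~(1): verifying that the induced metric $\bar g$ on the level set is not merely of constant sectional curvature but is actually isometric, after absorbing a constant into $\varphi$, to the standard round sphere. This calls for a careful local expansion in Gaussian tubular coordinates around $p_0$, using the soliton equation to read off the Taylor coefficients of $\varphi$ and ensure that the warped product metric extends $C^\infty$ across the center; the sphere rigidity then emerges from the fact that an umbilical hypersurface with metric of the form $\varphi(r)^2\bar g$ smoothly collapsing to a point with $\varphi'(0)=1$ must be round.
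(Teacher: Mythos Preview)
The paper does not prove this theorem; it is quoted from \cite{Tashiro65}, \cite{CSZ12}, and \cite{CMM12} as a preliminary structural input, so there is no in-paper argument to compare against. Your outline is the standard route taken in those references: use $\nabla\nabla F=(R-\lambda)g$ to see that $|\nabla F|$ and $R$ are constant on the regular level sets of $F$, that those level sets are totally umbilical, and then build the warped product in normal coordinates off a fixed level set, splitting on the number of critical points of $F$.

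Two places in your sketch are genuine gaps rather than routine details. First, your exclusion of the compact (two-critical-point) case is not correct as written: integrating $\Delta F=n(R-\lambda)$ over a closed $M$ only yields $\int_M R = \lambda\,\mathrm{Vol}(M)$, which is no contradiction by itself. The argument in \cite{CSZ12} first shows that a compact gradient Yamabe soliton has constant scalar curvature; then the divergence theorem forces $R=\lambda$, hence $\nabla\nabla F=0$, and on a closed manifold this makes $F$ constant, i.e.\ trivial. Second, your treatment of the sign of $\bar R$ under $R\ge 0$ is too soft. ``An asymptotic analysis as $|r|\to\infty$'' is not enough: what is actually needed is the ODE obtained by inserting $F''=R-\lambda$ into the warped scalar curvature formula and a concrete concavity/positivity argument for $\varphi=F'$ on all of $\mathbb{R}$ to rule out $\bar R\le 0$ unless $R\equiv\bar R\equiv 0$. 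This is precisely the kind of analysis the present paper carries out in Sections~\ref{3sh}--\ref{3ex} in dimension three, and you should expect the general-$n$ version to require the same care. Your identification of the smooth closing-up at a critical point (forcing $(N,\bar g)$ to be the round sphere) as the delicate technical point is accurate and matches Tashiro's original treatment.
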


\begin{remark}\label{depends r}
~
\begin{enumerate}

\item[$(1)$]
For a conformal soliton, that is, a Riemannian manifold $(M,g,F,\varphi)$ that satisfies the condition $\nabla\nabla F=\varphi g$, where $\varphi\in C^\infty(M)$, Tashiro \cite{Tashiro65} provided the structure theorem, and Catino, Mantegazza, and Mazzieri also provided the structure theorem in \cite{CMM12} $($see also \cite{Maeta21}$).$ 
Such manifolds were also studied by Cheeger and Colding \cite{CC96}.

\item[$(2)$] It is shown that $F$ depends only on $r$, and in Case (2) of Theorem \ref{CSZ}, without loss of generality, we can assume that $\rho(r) = F'(r) > 0$ on $\mathbb{R}$ (cf. \cite{CSZ12} see also \cite{Maeta21}).
\end{enumerate}
\end{remark}

The Riemannian curvature tensor is defined by 
\[
R(X,Y)Z=-\n_X\n_YZ+\n_Y\n_XZ+\n_{[X,Y]}Z,
\]
for $X,Y,Z\in \mathfrak{X}(M)$, where $\nabla$ is the Levi-Civita connection of $M$. The Ricci tensor $R_{ij}$ is defined by 
$R_{ij}=R_{ipjp},$ where $R_{ijk\ell}=g(R({\partial_i,\partial_j})\partial_k,\partial_\ell).$

The following lemma (cf. Lemma 2.4 \cite{Maeta28}) will play a fundamental role in the proof of our results later.

\begin{lemma}\label{equivlambda}
Let $(M,g,F,\lambda)$ be a nontrivial, complete, gradient Yamabe soliton such that 
$M=\mathbb{R}\times N^{n-1}$ and $g=dr^2+{{F'(r)}^2} \bar g$.
If $R\geq\lambda$ (resp. $R\leq\lambda$) on $M$, then $R>\lambda$ or $R\equiv\lambda$ (resp. $R<\lambda$ or $R\equiv\lambda$) on $M$.
\end{lemma}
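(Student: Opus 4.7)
The plan is to reformulate $R-\lambda$ as a derivative of $\rho := F' > 0$ and then invoke uniqueness of solutions to a second-order ODE. Applying the soliton equation $\n\n F = (R-\lambda)g$ in the $(\partial_r,\partial_r)$ slot immediately gives $F''(r)=R(r)-\lambda$, i.e.\ $\rho'=R-\lambda$ on $\R$. Hence the hypothesis $R\ge\lambda$ (resp.\ $R\le\lambda$) translates to $\rho'\ge 0$ (resp.\ $\rho'\le 0$), and the desired conclusion ``$R\equiv\lambda$'' is equivalent to ``$\rho$ is constant''.

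Next I would insert this into the warped-product scalar curvature formula. Since $\bar R$ is constant by Theorem~\ref{CSZ}~(2),
$$R = \frac{\bar R}{\rho^2} - 2(n-1)\frac{\rho''}{\rho} - (n-1)(n-2)\frac{(\rho')^2}{\rho^2},$$
and substituting $R=\lambda+\rho'$ rearranges this into the second-order ODE
$$2(n-1)\rho\rho'' + (\lambda+\rho')\rho^2 + (n-1)(n-2)(\rho')^2 = \bar R. \qquad (\star)$$
Because $\rho>0$ on $\R$ by Remark~\ref{depends r}, $(\star)$ has the form $\rho''=\Phi(\rho,\rho')$ with $\Phi$ smooth, so the initial value problem with data $(\rho(r_0),\rho'(r_0))$ admits a unique local solution.

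Now suppose $R\ge\lambda$ and $R(r_0)=\lambda$ for some $r_0\in\R$. Then $\rho'\ge 0$ with $\rho'(r_0)=0$, so $\rho'$ attains a global minimum at $r_0$, which forces $\rho''(r_0)=0$. Substituting into $(\star)$ at $r_0$ gives $\lambda\rho(r_0)^2=\bar R$---exactly the condition under which the constant function $\rho_\star(r)\equiv\rho(r_0)$ is itself a solution of $(\star)$. Since $\rho$ and $\rho_\star$ satisfy the same smooth second-order ODE with identical initial data at $r_0$, uniqueness forces $\rho\equiv\rho_\star$ on $\R$, and hence $R\equiv\lambda$. The case $R\le\lambda$ is identical, with ``maximum'' replacing ``minimum''.

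The only real obstacle is verifying that $(\star)$ is genuinely smooth so that ODE uniqueness applies; this boils down to the global positivity $\rho>0$ guaranteed by Remark~\ref{depends r}. Once that is secured, the whole proof reduces to a textbook comparison of two solutions of a second-order ODE that share an initial condition, with no maximum-principle machinery beyond Fermat's theorem required.
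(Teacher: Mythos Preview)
Your argument is correct. The paper does not actually supply a proof of this lemma; it is stated with a citation to Lemma~2.4 of \cite{Maeta28}, so there is no in-paper proof to compare against. Your route---observing that an interior extremum of $\rho'$ forces $\rho''(r_0)=0$, reading off $\lambda\rho(r_0)^2=\bar R$ from $(\star)$, and then invoking Picard--Lindel\"of uniqueness to identify $\rho$ with the constant solution $\rho_\star\equiv\rho(r_0)$---is sound; the global positivity $\rho>0$ from Remark~\ref{depends r} is exactly what keeps $(\star)$ nonsingular on all of $\R$, so local uniqueness propagates globally. For context, in the two-dimensional case (Section~\ref{2D}) the paper establishes the analogous dichotomy $R>0$, $R<0$, or $R\equiv 0$ via an integrating-factor trick on the first-order linear equation $2R'+\rho R=0$; that shortcut is unavailable for $n\ge 3$ because the $(n-1)(n-2)(\rho')^2$ term makes the resulting equation for $R$ nonlinear, so your second-order uniqueness argument is the natural general approach.
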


%%%%%%%%%%%%
%%%%%%%%%%%%
%%%%%%%%%%%%

\section{Classification of three-dimensional complete shrinking gradient Yamabe solitons}\label{3sh}

In this section, we classify nontrivial, nonflat, three-dimensional complete shrinking gradient Yamabe solitons. In particular, we show that any nontrivial, nonflat, three-dimensional complete shrinking gradient Yamabe soliton is rotationally symmetric. 

\begin{theorem}\label{mainshr}
Let $(M,g,F,\lambda)$ be a nontrivial, nonflat, three-dimensional complete shrinking gradient Yamabe soliton.
Then $(M,g,F,\lambda)$ is either
\begin{enumerate}
\item[$(1)$]
$([0,+\infty),dr^2)\times_{|\nabla F|}(\mathbb{S}^{2},{\bar g}_{S})$, or
\item[$(2)$]
$(\mathbb{R},dr^2)\times_{|\nabla F|}(\mathbb{S}^{2},{\bar g}_{S}).$
\end{enumerate}

\end{theorem}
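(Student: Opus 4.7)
The strategy is to apply Theorem~\ref{CSZ} and then identify the fiber in case~(2) of that theorem as the round $\mathbb{S}^{2}$. In dimension three $\mathbb{S}^{n-1} = \mathbb{S}^{2}$, so case~(1) of Theorem~\ref{CSZ} gives case~(1) of the present statement directly, with warping function $F'(r) = |\nabla F|$. All the substantive work is in case~(2): $M = \mathbb{R}\times N^{2}$ with $g = dr^{2} + \rho(r)^{2}\bar g$, $\rho := F' > 0$ on $\mathbb{R}$ by Remark~\ref{depends r}(2), and $\bar R$ a constant. Since a complete $2$-manifold of constant Gaussian curvature is the round $\mathbb{S}^{2}$ exactly when that curvature is positive, the proof reduces to showing $\bar R > 0$.

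To gain access to $\bar R$ I would combine the soliton identity evaluated on $\partial_{r}$, which yields $\rho' = R - \lambda$, with the warped-product scalar curvature formula $R = \bar R/\rho^{2} - 4\rho''/\rho - 2(\rho')^{2}/\rho^{2}$ valid in dimension three, to derive the pointwise identity
\[
R\rho^{2} + 2(R-\lambda)^{2} + 4\rho R' \;=\; \bar R.
\]
The right-hand side is a constant, so evaluating this at any convenient $r\in\mathbb{R}$ constrains $\bar R$ in terms of local data of $\rho$.

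Next I would split cases. If $R \equiv \lambda$, then $\rho$ is constant and the identity reduces to $\bar R = \lambda\rho^{2} > 0$, yielding the round cylinder. If $\rho$ possesses an interior local minimum at some $r_{*}\in\mathbb{R}$, then $\rho'(r_{*}) = 0$ and $\rho''(r_{*})\ge 0$, i.e.\ $R(r_{*}) = \lambda$ and $R'(r_{*})\ge 0$, so the identity at $r_{*}$ gives
\[
\bar R \;=\; \lambda\rho(r_{*})^{2} + 4\rho(r_{*}) R'(r_{*}) \;\ge\; \lambda\rho(r_{*})^{2} \;>\; 0.
\]
This disposes of every case in which $\rho' = R - \lambda$ changes sign from negative to positive somewhere on $\mathbb{R}$.

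The main obstacle is the remaining cases where $\rho$ admits no interior local minimum: either $\rho$ is strictly monotone on $\mathbb{R}$ (so that Lemma~\ref{equivlambda} upgrades the one-sided inequality $R\ge\lambda$ or $R\le\lambda$ to a strict one), or $\rho$ attains a unique global maximum and decreases towards limits $\rho_{\pm}\in[0,\rho(r_{*}))$ at $\pm\infty$. For these scenarios I would pass to the limit in the conservation law along sequences $r_{n}\to\pm\infty$; a Fatou-type argument using $|\rho'|\in L^{1}$ along the monotone tail produces $r_{n}$ with $\rho'(r_{n})\to 0$, and when the corresponding limit $\rho_{\infty}$ of $\rho$ is positive, the limit identity yields $\bar R = \lambda\rho_{\infty}^{2} > 0$. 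The genuinely delicate subcase is $\rho_{\infty} = 0$: here I would derive a contradiction by combining the nonflat hypothesis with the auxiliary clause in Theorem~\ref{CSZ}(2) that precludes $R\ge 0$ together with $\bar R\le 0$ unless $M$ is flat, thereby showing that $\rho \to 0$ at some end is incompatible with $\rho > 0$ on all of $\mathbb{R}$ once $\bar R \le 0$ is assumed. Once $\bar R > 0$ is established, $(N^{2},\bar g)$ is the round $\mathbb{S}^{2}$ and case~(2) of the theorem is proved.
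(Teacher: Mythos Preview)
Your setup and the derivation of the conservation law
\[
R\rho^{2} + 2(R-\lambda)^{2} + 4\rho R' = \bar R
\]
are exactly right, and your local-minimum case is clean. The trouble is in the final paragraph, where two genuine gaps appear.

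\textbf{Gap 1: the limit argument.} From $|\rho'|\in L^{1}$ you only get a subsequence $r_{n}$ with $\rho'(r_{n})\to 0$. Plugging into the identity then gives
\[
4\rho(r_{n})\rho''(r_{n}) \;\longrightarrow\; \bar R - \lambda\rho_{\infty}^{2},
\]
and nothing you have written forces the left side to vanish. So the conclusion $\bar R=\lambda\rho_{\infty}^{2}$ is unjustified; the $\rho''$ term has to be controlled, and that is exactly the hard part.

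\textbf{Gap 2: the $\rho_{\infty}=0$ subcase.} The auxiliary clause in Theorem~\ref{CSZ}(2) applies only under the \emph{global} hypothesis $R\ge 0$. In your monotone-decreasing scenario you have $\rho'<0$, hence $R<\lambda$, and nothing prevents $R<0$ somewhere; so the clause is not available as stated. (In the monotone-increasing case $R>\lambda>0$ and the clause does finish things, but you do not separate that out.)

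The paper avoids both difficulties by splitting on the sign of $R$ rather than on the critical-point structure of $\rho$. If $R\ge 0$ on all of $\mathbb{R}$, then assuming $\bar R\le 0$ the identity gives $4\rho\rho''=\bar R-\rho^{2}R-2(\rho')^{2}\le 0$, so the positive function $\rho$ is concave on $\mathbb{R}$, hence constant, which forces $\bar R=\lambda\rho^{2}>0$, a contradiction. If instead $\rho'<-\lambda$ (i.e.\ $R<0$) on some interval, positivity of $\rho$ prevents this from persisting to $+\infty$, so there is a right endpoint $r_{2}$ with $\rho'(r_{2})=-\lambda$ approached from below; at $r_{2}$ the identity reads $\bar R=2\lambda^{2}+4\rho(r_{2})\rho''(r_{2})$, and $\rho''(r_{2})\ge 0$ is forced, so $\bar R\ge 2\lambda^{2}>0$. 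This bypasses all asymptotic analysis. Your local-minimum idea is morally the same kind of ``evaluate at a good point'' trick, but the correct good point is where $R$ (not $\rho'$) returns to zero.
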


\begin{proof}

\noindent
Case (2) of Theorem \ref{CSZ}.

In this case, for $a,b,c,d=2,3,\cdots,n,$ the curvature tensors are obtained as follows:
\begin{align}\label{RT1}
R_{1a1b}&=-F'F'''{\bar g}_{ab},\quad R_{1abc}=0,\\
R_{abcd}&=(F')^2{\bar R}_{abcd}+(F'F'')^2(\bar g_{ad}\bar g_{bc}-\bar g_{ac}\bar g_{bd}),\notag
\end{align}
\begin{align*}
R_{11}=&-(n-1)\frac{F'''}{F'},\quad 
R_{1a}=0,\\
R_{ab}=&\bar R_{ab}-((n-2)(F'')^2+F'F''')\bar g_{ab},\notag
\end{align*}
\begin{align}\label{RT3}
R=(F')^{-2}\bar R-(n-1)(n-2)\Big(\frac{F''}{F'}\Big)^2-2(n-1)\frac{F'''}{F'},
\end{align}
where the curvature tensors with bar are the curvature tensors of $(N,\bar g)$.

By \eqref{RT3} and the soliton equation $\rho'=R-\lambda$, one has 
\begin{equation}\label{shr1}
\rho^2\rho'+\lambda\rho^2+2\rho'^2+4\rho\rho''-\bar R=0,
\end{equation}
where $\rho(r)=F'(r)$.
%By differentiating both sides of \eqref{ste1}, we have
%\begin{equation}\label{shr2}
%2\rho\rho'^2+\rho^2\rho''+2\lambda\rho\rho'+8\rho'\rho''+4\rho\rho'''=0.
%\end{equation}
%By \eqref{shr1}, $\bar R$ is obviously constant. 
Assume that $R\geq 0$ on $\mathbb{R}$, that is, $\rho'\geq -\lambda$. By \eqref{shr1}, we have
\[
\rho''\leq\frac{\bar R-2\rho'^2}{4\rho}.
\]
If $\bar R\leq 0$, then the positive function $\rho$ is concave. Hence, $\rho$ is constant. By \eqref{shr1}, we have $\bar R=\lambda \rho^2>0,$ which is a contradiction. Therefore, $\bar R$ is positive.

Therefore, we only need to consider the other case, that is, $\rho'<-\lambda$ on some interval $(r_1,r_2)$.
If $r_2=+\infty$, then $\rho=0$ at some point, which cannot occur because $\rho>0$. Hence, $r_2<+\infty$, that is, $\rho'=-\lambda$ at $r_2.$ By \eqref{shr1}, we have $\rho''=\frac{\bar R -2\lambda^2}{4\rho}$ at $r_2$. 
If $\bar R\leq 0$, then $\rho''=\frac{\bar R -2\lambda^2}{4\rho}<0$ at $r_2$, which cannot occur because $\rho'<-\lambda$ on $(r_1,r_2)$.
Therefore, $\bar R$ is positive. 
\end{proof}

%%%%%%%%%%%%%%%
%%%%%%%%%%%%%%%
%%%%%%%%%%%%%%%

\section{Classification of three-dimensional complete steady gradient Yamabe solitons}\label{3st}

In this section, we classify three-dimensional complete steady gradient Yamabe solitons. In particular, we show that any nontrivial, nonflat, three-dimensional complete steady gradient Yamabe soliton is rotationally symmetric.

\begin{theorem}\label{mainste}
Let $(M,g,F)$ be a nontrivial, nonflat, three-dimensional complete steady gradient Yamabe soliton.
Then $(M,g,F)$ is one of the following:
\begin{enumerate}
\item[$(1)$]
$([0,+\infty),dr^2)\times_{|\nabla F|}(\mathbb{S}^{2},{\bar g}_{S})$, or
\item[$(2)$]
$(\mathbb{R},dr^2)\times_{|\nabla F|}(\mathbb{S}^{2},{\bar g}_{S})$. In this case, there exists at least one point such that $R=0$.
\end{enumerate}
\end{theorem}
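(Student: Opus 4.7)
The plan is to follow the template of Theorem 3.1 (the shrinking case), specialized to $\lambda=0$, while adding a new argument to produce a zero of $R$. By Theorem 2.1 (CSZ), the soliton is a complete warped product of form (1) or (2); form (1) yields conclusion (1). We focus on form (2), where $M=\mathbb{R}\times N^2$ with $\bar R$ constant. Specializing the curvature identities used in the shrinking-case proof to $n=3,\ \lambda=0$ gives the master ODE
\[
\rho^2\rho'+2\rho'^2+4\rho\rho''=\bar R,
\]
with $\rho=F'>0$.

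For the first conclusion ($\bar R>0$, so $N\cong\mathbb{S}^2$), I would mirror the shrinking-case dichotomy. If $R\geq 0$ on $M$ (i.e., $\rho'\geq 0$) then the ODE gives $\rho''\leq\bar R/(4\rho)$, so $\bar R\leq 0$ makes $\rho$ a positive concave function on $\mathbb{R}$, hence constant; plugging back forces $\bar R=0$ and the metric is the flat product $\mathbb{R}\times(N,\bar g)$, which is excluded. Otherwise $\rho'<0$ on a maximal interval $(r_1,r_2)$. The case $r_2<+\infty$ is handled exactly as in Theorem 3.1: $\rho'(r_2)=0$ and $\rho''(r_2)\geq 0$, and the ODE at $r_2$ reads $\bar R=4\rho(r_2)\rho''(r_2)\geq 0$. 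The case $r_2=+\infty$ has no shrinking analog (since the stronger bound $\rho'<-\lambda$ forced $\rho$ to vanish in finite time when $\lambda>0$) and must be treated anew: $\rho$ decreases strictly to some limit $\rho_+\geq 0$, and the monotone quantity $E(r):=\rho\rho'^2-\bar R\rho/2$, whose derivative computes from the master ODE to $E'(r)=-\rho^2\rho'^2/2\leq 0$, forces $\rho'\to 0$; passing to the limit in the ODE then gives $4\rho_+\rho''(\infty)=\bar R$, ruling out $\bar R<0$. The degenerate case $\bar R=0$ I would dispose of via the substitution $\rho=e^\phi$, which linearizes the ODE to a first-order equation for $\phi'$ viewed as a function of $\phi$, with explicit general solution $\phi'=-\rho/10+C\rho^{-3/2}$; a short case check in $C$ shows that none of these solutions extend positively to all of $\mathbb{R}$.

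Once $\bar R>0$ is established, to produce a point with $R=0$ I would argue by contradiction using Lemma 2.4. If $R$ never vanishes on $M$, then $R>0$ or $R<0$ throughout, and $\rho$ is strictly monotone. For $R<0$, reparametrize by $s=\rho$ and set $w(s):=\rho'(r)^2$; the master ODE transforms to
\[
\frac{d(sw)}{ds}=\frac{1}{2}\bigl(\bar R+s^2\sqrt{w}\bigr),
\]
and integrating from $\rho_+:=\rho(+\infty)$ to $\rho_-:=\rho(-\infty)$, at which $w\to 0$ (a consequence of the $E$-analysis together with an asymptotic ODE check ruling out $\rho_\pm\in\{0,\infty\}$), produces $\bar R(\rho_--\rho_+)=-\int_{\rho_+}^{\rho_-} s^2\sqrt{w}\,ds\leq 0$, contradicting $\bar R>0$. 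For $R>0$ the symmetric identity $d(sw)/ds=(\bar R-s^2\sqrt{w})/2$ has a nonnegative right-hand side and gives no contradiction, so I would instead integrate the master ODE from $-T$ to $T$ and integrate by parts in the $\int\rho\rho''\,dr$ term: as $T\to\infty$ the left-hand side converges to the finite quantity $(\rho_+^3-\rho_-^3)/3-2\int_{-\infty}^{\infty}\rho'^2\,dr$ (finite because $\int\rho^2\rho'^2\,dr<\infty$ by $E$-monotonicity and $\rho_-^2>0$ by asymptotic analysis), while the right-hand side $2\bar R\,T$ diverges to $+\infty$, a contradiction. The main obstacle I anticipate is the borderline case $\bar R=0$ in the first step, where the algebraic rigidity available in Theorem 3.1 through the nonzero term $\bar R-2\lambda^2$ vanishes and one is forced to integrate the ODE explicitly via the substitution above.
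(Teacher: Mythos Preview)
Your overall architecture---first prove $\bar R>0$, then separately produce a zero of $R$---is different from the paper's, which instead differentiates the master ODE to obtain
\[
2\rho\rho'^2+\rho^2\rho''+8\rho'\rho''+4\rho\rho'''=0
\]
and uses this repeatedly. With this identity the paper rules out $\rho'\ge 0$ on all of $\mathbb{R}$ directly (independently of the sign of $\bar R$), and then shows that $\rho'<0$ on a maximal interval with right endpoint $+\infty$ forces $\bar R=0$; thus when $\bar R>0$ the right endpoint $r_2$ is finite and $R(r_2)=\rho'(r_2)=0$ is the sought zero---no separate second step is needed. Your Lyapunov quantity $E=\rho\rho'^2-\tfrac{\bar R}{2}\rho$ with $E'=-\tfrac12\rho^2\rho'^2$ is a nice observation, and your treatment of $\bar R=0$ by the substitution $\rho=e^{\phi}$ is essentially the paper's first integral $2\rho^{1/2}\rho'+\tfrac15\rho^{5/2}=C$ in disguise. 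But you actually undersell your own $E$-argument in the $r_2=+\infty$ case: if $\rho_+>0$, then $\rho'\to 0$ forces $\rho''\to \bar R/(4\rho_+)$, and \emph{that} limit must be $0$ (else $\rho'$ diverges), so you get $\bar R=0$, not merely $\bar R\ge 0$.

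The genuine gap is in your second step, where you assume an ``asymptotic ODE check'' rules out $\rho_\pm\in\{0,\infty\}$. This is exactly where the difficulty lies, and your $E$-monotonicity alone does not do it. Concretely, in the case $R<0$ everywhere with $\bar R>0$, the $E$-analysis is entirely compatible with $\rho_-=\lim_{r\to-\infty}\rho=+\infty$: one finds $E\nearrow+\infty$ and $\rho'\to-\sqrt{\bar R/2}$ as $r\to-\infty$, and nothing in your outline excludes this. With $\rho_-=\infty$ your $sw$-integration yields $\infty=\infty$, no contradiction; and the analogous blow-up $\rho_+=\infty$ in the $R>0$ case likewise spoils your $\int_{-T}^{T}$ argument. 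The paper closes this hole not by asymptotics but by the differentiated identity above: at any zero of $\rho''$ one has $\rho'''=-\tfrac12\rho'^2<0$, which forces $\rho''$ to be eventually one-signed and leads to a contradiction in each branch. I would either import that identity into your scheme, or sharpen your $r_2=+\infty$ step to conclude $\bar R=0$ (handling $\rho_+=0$ separately), which then gives the zero of $R$ automatically and makes your second step unnecessary.
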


\begin{proof}
\noindent
Case (2) of Theorem \ref{CSZ}.

Since $(N^2,{\overline g})$ is a two-dimensional manifold,
\begin{align}\label{RT4}
{\overline R}_{abcd}&=-\frac{\bar R}{2}(\bar g_{ad} \bar g_{bc}-\bar g_{ac}\bar g_{bd}).%\\
%\bar R_{ad}&=\frac{\bar R}{2}\bar g_{ad}.\notag
\end{align}
By \eqref{RT3} and the soliton equation $R=\rho'$, one has
\begin{equation}\label{ste1}
\rho^2\rho'+2\rho'^2+4\rho\rho''=\bar R,
\end{equation}
where $\rho(r)=F'(r).$
By differentiating both sides of \eqref{ste1}, we have
\begin{equation}\label{ste2}
2\rho\rho'^2+\rho^2\rho''+8\rho'\rho''+4\rho\rho'''=0.
\end{equation}

Assume that $\rho'\geq0$ on $\mathbb{R}$. If $\rho''\leq 0$ on $\mathbb{R}$, then the positive function $\rho$ is concave. Hence, $\rho$ is constant. By \eqref{ste1}, $\bar R=0$. By \eqref{RT4} and \eqref{RT1}, $M$ is flat.
Therefore, $\rho''>0$ on some interval $(r_1,r_2)$. 

If $r_2=+\infty$, since $\rho'\geq0$, $\rho$ tends to infinity as $r\nearrow+\infty$. 
Hence, the left-hand side of \eqref{ste1} tends to infinity as $r\nearrow+\infty$, which cannot occur.

Assume that $r_2<+\infty$, that is, $\rho''>0$ on $(r_1,r_2)$ and $\rho''=0$ at $r_2.$
By \eqref{ste2}, we have $\rho'''<0$ at $r_2$. 
Hence, one has $\rho''< 0$ on $(r_2,r_3)$ for some $r_3$. By the same argument, $\rho'$ is monotonically decreasing on $(r_2,+\infty)$. Without loss of generality, we can assume that the point $r_2$ is the first critical point of $\rho'$.
Since $\rho'\geq0$ and $\rho''>0$ on $(-\infty, r_2)$, there exists $r_0\in(-\infty,r_2)$ such that $\rho'''=0$. However, this contradicts \eqref{ste2}.
 
Therefore, we have $\rho'<0$ on some open interval $(r_1,r_2)$.

Assume that $r_2= +\infty$, that is, $\rho'<0$ on $(r_1,+\infty)$.
If $\rho''<0$ on $(r_1,+\infty)$, we have a contradiction, because $\rho>0$.

Assume that there exists $\tilde r\in(r_1,+\infty)$ such that $\rho''(\tilde r)=0$. By \eqref{ste2}, we have $\rho'''(\tilde r)<0$. Iterating this argument, we see that $\rho'$ is weakly decreasing on $(\tilde r,+\infty)$. Hence, one has $\rho'<-\beta^2$ for some nonzero constant $\beta$. 
This is a contradiction, because $\rho>0$.

Hence, $\rho''>0$ on $(r_1,+\infty)$. 
Thus, one has $\rho'(r)\nearrow \alpha(\leq0)$ for some constant $\alpha$ as $r\nearrow+\infty.$ If $\alpha<0$, then we have a contradiction, because $\rho>0$. Hence, one has $\rho'\nearrow0$ as $r\nearrow+\infty.$
 By \eqref{ste1}, $\rho''$ converges to some constant $c$ or tends to $+\infty$ as $r\nearrow+\infty$. Since $\rho>0$ and $\rho'<0$ on $(r_1,+\infty)$, the second case cannot occur, and we have the first case with $c=0$, that is, $\rho''\searrow0$ as $r\nearrow +\infty.$ 
 By \eqref{ste1}, $\bar R$ must be $0$. 
 Assume that $(r_{-2},r_{-1})$ is some maximal interval such that $\rho'>0$.
 By \eqref{ste1}, we have $\rho''<0$ on $(r_{-2},r_{-1})$. If $\rho''=0$ at some point, then by \eqref{ste1}, we have $\rho'\leq0$ at this point, which means that $r_{-2}$ must be $-\infty$. However, since $\rho>0$, this cannot occur.
 Hence, we have $\rho'\leq0$ on $\mathbb{R}$. Therefore, $R\leq0$ on $M.$

Assume that $r_2<+\infty$, that is, $\rho'=0$ at $r_2$. 

\noindent
Case 1: $\bar R>0$. In this case, 
\[
\rho''=\frac{\bar R}{4\rho}>0,
\]
at $r_2$, and hence
$\rho'>0~~\text{on}~~(r_2,r_3),~~~\text{for some}~~r_3.$ 
If $\rho'=0$ at $r_3$, then $\rho''>0$ at $r_3$. Hence, $\rho$ is monotonically increasing on $(r_2,+\infty)$.
If $\rho'=0$ at $r_1$, then $\rho''>0$ at $r_1$, which cannot occur.
Therefore, $\rho'<0$ on $(-\infty, r_2)$. 

\noindent
Case 2: $\bar R=0$. By the same argument as above, we have $\rho'\leq 0$ on $\mathbb{R}$.
The equation \eqref{ste1} becomes
\[
4\rho\rho'' + 2(\rho')^2 + \rho^2\rho' = 0.
\]
This is equivalent to
\[
\frac{d}{dr}\left( 2\rho^{1/2}\rho' + \frac{1}{5}\rho^{5/2} \right) = 0.
\]
Integrating this with respect to $r$ yields
\[
2\rho^{1/2}\rho' + \frac{1}{5}\rho^{5/2} = C,
\]
where $C$ is a constant of integration. Hence, we obtain
\[
\rho' = -\frac{1}{10}\rho^2 + \frac{C}{2}\rho^{-1/2}.
\]
We now consider the sign of the constant $C$.

\noindent
{Case 2-(a): $C \le 0$.}

Since $\rho > 0$, we have $C\rho^{-1/2} \le 0$. Thus, the equation implies
\[
\rho' \le -\frac{1}{10}\rho^2.
\]
Dividing by $\rho^2$, we obtain $-\frac{\rho'}{\rho^2} \ge \frac{1}{10}$.
Integrating this inequality from $r$ to $r_0$ (where $r < r_0$), we find
\[
\frac{1}{\rho(r_0)} - \frac{1}{\rho(r)} \ge \frac{1}{10}(r_0 - r).
\]
Taking the limit as $r \searrow -\infty$, the right-hand side diverges to $+\infty$. However, since $\rho(r) > 0$, the left-hand side is strictly bounded from above by $\frac{1}{\rho(r_0)}$. This is a contradiction.

\noindent
Case 2-(b): $C > 0$.

By the condition $\rho' \le 0$ on $\mathbb{R}$, we have
\[
-\frac{1}{10}\rho^2 + \frac{C}{2}\rho^{-1/2} \le 0,
\]
which implies 
\[
\rho(r) \ge (5C)^{2/5} > 0.
\]
This shows that $\rho(r)$ is bounded from below by a positive constant.
Since $\rho(r)$ is a monotonically decreasing function, $\rho'\nearrow0$ and $\rho\searrow\alpha \ge (5C)^{2/5}$ as $r \nearrow +\infty$.
Substituting this into the equation $\rho' = -\frac{1}{10}\rho^2 + \frac{C}{2}\rho^{-1/2}$, we find that the limit must be $\alpha = (5C)^{2/5}$.

Next, we consider the limit as $r \searrow -\infty$.
Because $\rho(r)$ is monotonically decreasing, as $r \searrow -\infty$, $\rho(r)$ either converges to a finite limit or tends to $+\infty$.
If $\rho(r)$ converges to a finite limit, by the same argument, $\rho'(r) \nearrow 0$ as $r\searrow-\infty$ and the limit must also be $(5C)^{2/5}$.
Since $\rho$ is a monotonic function, $\rho$ is constant and $M$ is flat, which contradicts the assumption that the soliton is nonflat.

Thus, $\rho(r)$ must tend to $+\infty$ as $r \searrow -\infty$.
As $\rho \nearrow +\infty$, the term $C\rho^{-1/2}$ tends to $0$.
Therefore, there exists a point $r_1 \in \mathbb{R}$ such that for all $r < r_1$, we have $\frac{C}{2}\rho^{-1/2} < \frac{1}{20}\rho^2$.
For $r < r_1$, the differential equation yields
\[
\rho' = -\frac{1}{10}\rho^2 + \frac{C}{2}\rho^{-1/2} < -\frac{1}{10}\rho^2 + \frac{1}{20}\rho^2 = -\frac{1}{20}\rho^2,
\]
which yields
\[
-\frac{\rho'}{\rho^2} > \frac{1}{20}.
\]
Integrating this inequality from $r$ to $r_1$ (where $r < r_1$), we have
\[
\int_{r}^{r_1} \left( -\frac{\rho'(s)}{\rho(s)^2} \right) ds > \int_{r}^{r_1} \frac{1}{20} ds,
\]
which gives
\[
\frac{1}{\rho(r_1)} - \frac{1}{\rho(r)} > \frac{1}{20}(r_1 - r).
\]
Taking the limit as $r \searrow -\infty$, the right-hand side tends to $+\infty$.
However, the left-hand side is strictly bounded by $\frac{1}{\rho(r_1)}$. This is a contradiction.

\noindent
Case 3: $\bar R<0$. In this case, $\rho''<0$ at $r_2$. Hence, $\rho'<0$ on $(r_2,r_3)$ for some $r_3$. 
If $r_3=+\infty,$ by the same argument as above, one has $\bar R=0$, which is a contradiction. Hence, $r_3<+\infty$, that is, $\rho'=0$ at $r_3$, which means that there exists $\tilde r_2\in(r_2,r_3)$ such that $\rho''=0$. By \eqref{ste2}, $\rho'''<0$ at $\tilde r_2$, which cannot occur.
\end{proof}

%%%%%%%%%%%%%%%%%%%%%%%
%%%%%%%%%%%%%%%%%%%%%%%

\section{Classification of three-dimensional complete expanding gradient Yamabe solitons}\label{3ex}

In the study of Ricci solitons, much less is understood about the expanding case compared to the shrinking or steady cases. 
A similar phenomenon appears to occur for Yamabe solitons as well.
However, as will become evident in the two-dimensional case discussed later, there are numerous expanding Yamabe solitons, making them an interesting subject from a geometric standpoint.
In this section, we classify nontrivial, nonflat, three-dimensional complete expanding gradient Yamabe solitons. 

\begin{theorem}\label{mainexp}
Let $(M,g,F,\lambda)$ be a nontrivial, nonflat, three-dimensional complete expanding gradient Yamabe soliton.
Then $(M,g,F,\lambda)$ is one of the following:
\begin{enumerate}
\item[$(1)$]
$([0,+\infty),dr^2)\times_{|\nabla F|}(\mathbb{S}^{2},{\bar g}_{S})$, or
\item[$(2)$]
$(\mathbb{R},dr^2)\times_{|\nabla F|}(\mathbb{S}^{2},{\bar g}_{S}),$ or
\item[$(3)$]
$(\mathbb{R},dr^2)\times_{|\nabla F|}\mathbb{E}^2$ with negative scalar curvature, or
\item[$(4)$]
$(\mathbb{R},dr^2)\times_{|\nabla F|}(\mathbb{H}^{2},{\bar g}_{H})$ with negative scalar curvature, 
\end{enumerate}
where $\mathbb{E}^2$ is the $2$-dimensional Euclidean space, and $(\mathbb{H}^{2},{\bar g}_{H})$ is a $2$-dimensional hyperbolic space.

\end{theorem}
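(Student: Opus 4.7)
The plan follows the template of Theorems \ref{mainshr} and \ref{mainste}. By Theorem \ref{CSZ} there are two cases. Case (1) directly yields conclusion (1), so all the work happens in Case (2), where $M = (\mathbb{R}, dr^2) \times_{\rho^2} (N^{2}, \bar g)$ with $\rho = F' > 0$ and $N^2$ of constant scalar curvature $\bar R$. Since $N$ is a complete surface of constant Gaussian curvature $\bar R / 2$, the three possibilities $\bar R > 0$, $\bar R = 0$, $\bar R < 0$ would select fibers $\mathbb{S}^2$, $\mathbb{E}^2$, and $\mathbb{H}^2$ respectively, corresponding to the remaining conclusions (2)--(4). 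Combining \eqref{RT3} with $n = 3$ and the soliton identity $\rho' = R - \lambda$ produces, in direct analogy with \eqref{shr1} and \eqref{ste1}, the defining ODE
\begin{equation}\label{exp_ode_plan}
4\rho\rho'' + 2(\rho')^2 + \rho^2\rho' + \lambda\rho^2 = \bar R.
\end{equation}

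The substantive content is the scalar curvature constraint in conclusions (3) and (4): when $\bar R \leq 0$, one needs to show $R < 0$ on $M$. Since $R = \rho' + \lambda$ and $\lambda < 0$, this is the same as $\rho' < -\lambda$ on $\mathbb{R}$. I would argue by contradiction: assume $\rho'(r_0) \geq -\lambda$ at some $r_0$ and study the closed set $J := \{r : \rho'(r) \geq -\lambda\}$. On $J$ the identity \eqref{exp_ode_plan} rewrites as $4\rho\rho'' = \bar R - 2(\rho')^2 - \rho^2 R$; combining $\bar R \leq 0$, $(\rho')^2 \geq \lambda^2$ (since $\rho' \geq -\lambda = |\lambda|$), and $R \geq 0$ on $J$ then gives $\rho'' \leq -\lambda^2/(2\rho) < 0$ strictly on $J$.

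Strict concavity on $J$ is meant to propagate $J$ leftward: for any $r \in J$, $\rho''(r) < 0$ together with $\rho'(r) \geq -\lambda$ yields $\rho'(r - \epsilon) > \rho'(r) \geq -\lambda$ for small $\epsilon > 0$, so $r - \epsilon \in J$. This self-propagation, combined with closedness of $J$, forces $(-\infty, r_0] \subset J$. But on such a half-line $\rho' \geq -\lambda = |\lambda| > 0$, so integrating backward from $r_0$ forces $\rho(r) \to -\infty$ as $r \to -\infty$, contradicting $\rho > 0$. Hence $R < 0$ globally whenever $\bar R \leq 0$, and reading off the three sign cases for $\bar R$ yields conclusions (2)--(4).

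The principal obstacle will be the strict concavity step, and in particular ruling out tangential contact of $\rho'$ with $-\lambda$: this is handled by the strict inequality $4\rho\rho''|_{\rho' = -\lambda} = \bar R - 2\lambda^2 < 0$, which uses $\lambda \neq 0$. By contrast with Theorem \ref{mainste}, where the subcase $\bar R = 0$ demanded a delicate explicit integration argument, the additional term $\lambda \rho^2$ with $\lambda < 0$ in \eqref{exp_ode_plan} supplies this strict negativity automatically, so no analogous integration trick should be necessary in the expanding setting.
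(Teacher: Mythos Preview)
Your proposal is correct and follows essentially the same approach as the paper: both reduce Case~(2) with $\bar R\le 0$ to showing $R<0$ everywhere, and both obtain the contradiction by observing that wherever $R\ge 0$ the ODE forces $\rho''<0$, which propagates the condition $R\ge 0$ (equivalently $\rho'\ge -\lambda>0$) to an entire left half-line, contradicting $\rho>0$. The only cosmetic difference is that the paper works in the variable $R$ and treats the open set $\{R>0\}$ and the boundary case $R(r_0)=0$ separately, whereas your closed-set formulation $J=\{\rho'\ge -\lambda\}$ handles both at once.
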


\begin{proof}

\noindent
Case (2) of Theorem \ref{CSZ}.

By \eqref{RT3} and the soliton equation $\rho'=R-\lambda$, one has 
\begin{equation}\label{exp1}
\rho^2R+2(R-\lambda)^2+4\rho R'-\bar R=0.
\end{equation}
By differentiating both sides of \eqref{exp1}, we have
\begin{equation}\label{exp2}
2\rho(R-\lambda)R+\rho^2R'+8(R-\lambda)R'+4\rho R''=0.
\end{equation}
We consider the case $\bar R\leq 0$.
Assume that $R>0$ on some open interval $(r_1,r_2)$. In this case, by \eqref{exp1}, one has $R'<0$ on $(r_1,r_2)$. 
If $R=0$ at $r_1$, then by \eqref{exp1}, one has $R'<0$ at $r_1$, which is a contradiction. Hence, $R>0$ and $R'<0$ on $(-\infty,r_2)$. Therefore, $\rho$ is a strictly monotonically increasing concave function on $(-\infty,r_2)$. However, since $\rho$ is positive, we have a contradiction. Therefore, we have $R(r)\leq0$ on $\mathbb{R}.$ 
Assume that $R=0$ at some point $r_0$. By \eqref{exp1}, 
one has $R'<0$ at $r_0$. By \eqref{exp2}, one has $R''>0$ at $r_0$, which cannot occur. Therefore, we have $R<0$ on $\mathbb{R}.$
\end{proof}

We provide examples of (4) in Theorem \ref{mainexp}. For $n$-dimensional complete expanding gradient Yamabe solitons, an elementary argument yields interesting examples that do not appear in the steady and shrinking cases.

\begin{example}
Let $(N^{n-1},~\bar g)$ be an $(n-1)$-dimensional complete Riemannian manifold with constant negative scalar curvature $\bar R$. Then, for any $\alpha \in \mathbb{R}$, $(M,g,F,\lambda)=(\mathbb{R}\times N^{n-1}, dr^2+\frac{\bar R}{\lambda}\bar g, \sqrt{\frac{\bar R}{\lambda}} r+\alpha,\lambda)$ is an $n$-dimensional complete expanding gradient Yamabe soliton with $R=\lambda.$

In particular, $(M^3,g,F,\lambda)=(\mathbb{R}\times \mathbb{H}^2, dr^2+\frac{\bar R}{\lambda} {\bar g}_{H}, \sqrt{\frac{\bar R}{\lambda}} r+\alpha,\lambda)$ is a 3-dimensional complete expanding gradient Yamabe soliton with $R=\lambda$, where $(\mathbb{H}^2,{\bar g}_H)$ is a hyperbolic surface.

\end{example}

%%%%%%%%%%%%

%%%%%%%%%%%%
%%%%%%%%%%%%
%%%%%%%%%%%%

\section{Classification of two-dimensional complete gradient Yamabe solitons}\label{2D}

In this section, we provide a proof for the classification of nontrivial, nonflat, two-dimensional complete gradient Yamabe solitons.

\begin{theorem}\label{main2D}
Let $(M^2,g,F,\lambda)$ be a nontrivial, nonflat, two-dimensional complete gradient Yamabe soliton. Then exactly one of the following statements holds:

\noindent
{\bf Steady case:}

The manifold $(M^2,g,F,\lambda)$ is Hamilton's cigar soliton.

\noindent
{\bf Shrinking case:}

There are no nontrivial, nonflat, two-dimensional complete gradient Yamabe solitons.

\noindent
{\bf Expanding case:}

The manifold $(M^2,g,F,\lambda)$ is one of the following.
\begin{enumerate}
\item[$(1)$]
A warped product manifold $(\mathbb{R}\times N^1, g=dr^2+F'(r)^2d\theta^2,F,\lambda)$. The Gaussian curvature $K$ of $\mathbb{R}\times N^1$ depends only on $r$, is strictly increasing, and satisfies $\frac{\lambda}{2}<K<0$. Furthermore, $M$ is asymptotically flat and $K\searrow\frac{\lambda}{2}$ as $r\searrow-\infty.$

\item[$(2)$]
A warped product manifold $([0,+\infty)\times \mathbb{S}^1, g=dr^2+F'(r)^2d\theta^2,F,\lambda)$ with $\lambda>-1$, $F'(0)=0$, and $F''(0)=1$. The Gaussian curvature $K$ of $[0,+\infty)\times \mathbb{S}^1$ depends only on $r$, is strictly decreasing, and satisfies $0<K\leq\frac{1+\lambda}{2}$. Furthermore, $M$ is asymptotically flat.

\item[$(3)$]
A warped product manifold $([0,+\infty)\times \mathbb{S}^1, g=dr^2+F'(r)^2d\theta^2,F,\lambda)$ with $\lambda<-1$, $F'(0)=0$, and $F''(0)=1$. The Gaussian curvature $K$ of $[0,+\infty)\times \mathbb{S}^1$ depends only on $r$, is strictly increasing, and satisfies $\frac{1+\lambda}{2}\leq K<0$. Furthermore, $M$ is asymptotically flat.
\end{enumerate}

\end{theorem}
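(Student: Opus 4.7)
My plan is to apply Theorem~\ref{CSZ} to reduce the classification to an autonomous first-order ODE system, exploit a conserved quantity, and then carry out a phase-plane analysis for each sign of $\lambda$. By Theorem~\ref{CSZ} the soliton is a warped product $g = dr^2 + \rho(r)^2 \bar g$ with $\rho = F'$, where either $I = [0, +\infty)$ with $N^1 = \mathbb{S}^1$, $\rho(0) = 0$, and $\rho'(0) = 1$, or $I = \mathbb{R}$ with $\rho > 0$. In dimension two the one-dimensional fiber has $\bar R = 0$, so formula~\eqref{RT3} at $n = 2$ collapses to $R = -2\rho''/\rho = 2K$, where $K$ is the Gaussian curvature; combined with the soliton equation $\rho' = R - \lambda$, this yields the autonomous system
\[
\rho' = 2K - \lambda, \qquad K' = -\tfrac{1}{2}K\rho.
\]
Since $\rho > 0$, the second equation forces $K$ to have constant sign (otherwise $K \equiv 0$ and $M$ is flat), which already gives the strict monotonicity of $K$ asserted in each case of the theorem.

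The central tool is the conserved quantity
\[
H(\rho,K) := \rho^2 + 8K - 4\lambda \log|K|,
\]
whose constancy along solutions is a direct one-line calculation. Each trajectory therefore lies on a level curve of $H$ in the upper half-plane $\{\rho > 0\}$. The function $K \mapsto 8K - 4\lambda\log|K|$ has a unique critical point at $K = \lambda/2$, whose type (maximum or minimum on $\{K>0\}$ and on $\{K<0\}$) is determined by the sign of $\lambda$; the range of $K$, the zeros of $\rho$, and the asymptotics at the endpoints of the $K$-range can then all be read off directly from the shape of these level curves.

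For $\lambda > 0$ (shrinking), I aim to rule out both cases of Theorem~\ref{CSZ}. In Case~$(2)$, on $\{K>0\}$ the level curves are closed and force $\rho$ to vanish in finite $r$, while on $\{K<0\}$ the asymptotics $\rho\sim\sqrt{-8K}$ as $K\to-\infty$ together with $K' = -K\rho/2$ force $|K|$ to blow up in finite $r$. In Case~$(1)$ the pole data give $K(0) = (1+\lambda)/2$, the trajectory returns to a second zero of $\rho$, and the smooth-closure condition $\rho' = -1$ there reduces to the transcendental identity $\log\tfrac{\lambda+1}{\lambda-1} = \tfrac{2}{\lambda}$, which a short monotonicity estimate shows has no solution for $\lambda > 0$. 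For $\lambda = 0$ (steady), the conservation law collapses to the Riccati equation $\rho' = (C-\rho^2)/4$; the pole conditions in Case~$(1)$ force $C = 4$ and the explicit solution $\rho(r) = 2\tanh(r/2)$, which is Hamilton's cigar, while in Case~$(2)$ every sign of $C$ yields an explicit solution that either vanishes or blows up at finite $r$.

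For $\lambda < 0$ (expanding), the analysis splits by the sign of $K(0) = (1+\lambda)/2$ in Case~$(1)$ of Theorem~\ref{CSZ}: positive when $\lambda > -1$ (yielding Case~$(2)$ of the theorem with $0 < K \le (1+\lambda)/2$) and negative when $\lambda < -1$ (yielding Case~$(3)$ with $(1+\lambda)/2 \le K < 0$). In Case~$(2)$ of Theorem~\ref{CSZ}, the requirement that $\rho > 0$ on all of $\mathbb{R}$ pins down $H$ to the unique critical level at which the minimum of $K \mapsto 8K - 4\lambda\log|K|$ on $\{K<0\}$ meets the $\rho = 0$ axis; the trajectory is then confined to the branch $K \in (\lambda/2, 0)$, asymptotic to $K = \lambda/2$ as $r \to -\infty$ and to $K = 0$ as $r \to +\infty$, yielding Case~$(1)$ of the theorem. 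Asymptotic flatness is verified in each case by parametrizing $r = -\int 2\,dK/(K\rho(K))$ along the level curve and checking that this integral diverges precisely as $K \to 0$. The main obstacle I foresee is exactly Case~$(2)$ of Theorem~\ref{CSZ} in the expanding regime: identifying the correct critical level of $H$ and ruling out all non-critical levels, whose trajectories either hit $\rho = 0$ in finite $r$ or send $|K|$ to infinity in finite $r$, requires a careful asymptotic analysis at both ends of the $K$-range.
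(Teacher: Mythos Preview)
Your approach is correct and genuinely different from the paper's. The paper never introduces a first integral; instead it works directly with the third-order scalar ODE $2\rho''+\rho\rho'+\lambda\rho=0$ and its derivative, and proceeds by a long case analysis based on the signs of $\rho'$ and $\rho''$: concavity/convexity arguments to force contradictions with $\rho>0$, explicit $\tanh$-solutions in the steady case, and ad hoc differential inequalities of the form $-\rho'/\rho^{2}>c$ (integrated to produce finite-time vanishing or blowup) in the harder subcases. Your discovery of the conserved quantity $H(\rho,K)=\rho^{2}+8K-4\lambda\log|K|$ replaces most of this with the geometry of a single family of level curves: closed curves when $\lambda>0$ and $K>0$ immediately force $\rho$ to return to zero, the saddle at $(0,\lambda/2)$ for $\lambda<0$ singles out the unique heteroclinic trajectory realizing expanding Case~(1), and the range and monotonicity of $K$ fall out of the shape of $8K-4\lambda\log|K|$ rather than from separate asymptotic arguments. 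What the paper's approach buys is that it needs no insight beyond the ODE itself; what yours buys is a unified picture in which the curvature bounds $\lambda/2<K<0$ and $K\lessgtr(1+\lambda)/2$ and the asymptotics at both ends are read off simultaneously rather than established one at a time. One minor point: in the shrinking Case~(1) analysis, once the closed level curve forces a second zero of $\rho$ in finite $r$ you already contradict the hypothesis $\rho>0$ on $(0,+\infty)$ from Theorem~\ref{CSZ}, so the transcendental identity $\log\frac{\lambda+1}{\lambda-1}=\frac{2}{\lambda}$ is only needed if you also want to rule out a compact $\mathbb{S}^{2}$ soliton independently of how Theorem~\ref{CSZ} is stated.
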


\begin{proof}
By the soliton equation, one has
\[
F''(r)=R-\lambda
.\]
By an elementary fact of the curvature of a warped product, one has
\[
R=-2\frac{F'''}{F'}.
\]
Combining these equations, we have
\begin{equation}\label{1}
2\rho''+\rho\rho'+\lambda\rho=0,
\end{equation}
where we denote $\rho(r):=F'(r).$ 
By differentiating both sides of \eqref{1}, we have
\begin{equation}\label{3}
2\rho'''+\rho'^2+\rho\rho''+\lambda\rho'=0.
\end{equation}
%%1
The equation \eqref{1} is equivalent to
\begin{equation*}
2R'+\rho R=0.
\end{equation*}

We show that $R>0$ or $R<0$ on $\mathbb{R}$ (resp. $[0,+\infty)$).
Assume that there exists $\tilde r \in \mathbb{R}$ (resp. $\tilde r\in[0,+\infty)$) such that $R(\tilde r)=0$.
Set $A(r)=\int_{\tilde r}^r \frac{\rho(s)}{2}ds.$
Since 
\[
\frac{d}{dr}(e^{A(r)}R(r))
=e^{A(r)}
\left(
\frac{1}{2}\rho(r) R(r)+R'(r)
\right)
=0,
\]
we see that $e^{A(r)}R(r)$ is constant, say $c$. Thus, $R(r)=ce^{-A(r)}.$ Since $A(\tilde r)=0$, we have $c=R(\tilde r)=0$. Therefore, we have $R\equiv0$ on $\mathbb{R}$ (resp. $[0,+\infty)$). Hence, $M$ is flat.
%%2
 \\
 
\noindent
{\bf Case (2) of Theorem \ref{CSZ}.}
\\

\noindent
{\bf Case 1 $\lambda=0$.}

In this case, it is easy to see that the solutions to \eqref{1} are
\[
\rho(r)=2c_1^2\tanh\left\{\frac{1}{2}(c_1^2 r+c_1^2c_2 )\right\},
\]
where $c_1$ and $c_2$ are constants. However, this contradicts the assumption that $\rho(r)>0$ on $\mathbb{R}$.
\\

\noindent
{\bf Case 2 $\lambda>0$.}
\\

\noindent
(I) When $R>0$. By \eqref{1}, one has $\rho''<0$. Therefore, $\rho$ is a positive concave function, which cannot occur.
\\

\noindent
(II) When $R<0$. We have $\rho'<-\lambda$. Since $\rho>0$, we have a contradiction. 
\\

\noindent
{\bf Case 3 $\lambda<0$}.
\\

\noindent
(I) When $R>0$. By \eqref{1}, one has $\rho''<0$. Therefore, $\rho$ is a positive concave function, which cannot occur.
\\

\noindent
(II) When $R<0$. By \eqref{1}, $\rho''>0$. Hence, there exists at most one $r_0\in\mathbb{R}$ such that $\rho'(r_0)=0$.
\\

\noindent
%(II)-(a) Assume that there exists $r_0$ such that $\rho'(r_0)=0$. Since $\rho''>0$, we have $\rho'<0$ on $(-\infty,r_0)$ and $\rho'>0$ on $(r_0,+\infty)$. Set $s:=-r$. Then, we have $\dot\rho(s)=-\rho'(r)>0$ and $\ddot\rho(s)>0$ $(s\in(-r_0,+\infty))$. Hence, we have $\rho(s)\nearrow+\infty$ as $s\nearrow+\infty.$ By \eqref{3}, we have $\dddot\rho>0$. From this and $\ddot\rho>0$, we have $\dot\rho\nearrow+\infty$ as $s\nearrow+\infty.$ Therefore, we have $\rho'\searrow -\infty$ as $r\searrow -\infty$. Since $\rho>0$, we have a contradiction.
%\\
\noindent
(II)-(a) Assume that there exists $r_0 \in \mathbb{R}\cup\{+\infty\}$ such that $\rho' < 0$ on $(-\infty, r_0)$. 
(This covers both the case where $\rho'(r_0) = 0$ for some finite $r_0$, and the case where $\rho' < 0$ on all of $\mathbb{R}$.)
Because $\rho$ is positive, strictly decreasing, and convex on $(-\infty, r_0)$, we have $\rho(r)\nearrow+\infty$ as $r\searrow-\infty$.
By \eqref{1}, we have $2\rho'' = -\rho\rho' - \lambda\rho$. Since $\lambda < 0$ and $\rho > 0$, we obtain the strict inequality
\begin{equation*}
2\rho'' > -\rho\rho'.
\end{equation*}
Integrating this inequality from $r$ to $r_1$ (where $r < r_1 < r_0$), we obtain
\begin{equation*}
2\rho'(r_1) - 2\rho'(r) > -\frac{1}{2}\rho(r_1)^2 + \frac{1}{2}\rho(r)^2.
\end{equation*}
Thus, we have
\begin{equation*}
-2\rho'(r) > \frac{1}{2}\rho(r)^2 - C,
\end{equation*}
where $C = \frac{1}{2}\rho(r_1)^2 + 2\rho'(r_1)$ is a constant.
Since $\rho(r) \nearrow +\infty$ as $r \searrow -\infty$, for $r$ sufficiently negative (say, $r < r_2 < r_1$), we have $\frac{1}{2}\rho(r)^2 - C > \frac{1}{4}\rho(r)^2 > 0$.
Thus, we get $-2\rho'(r) > \frac{1}{4}\rho(r)^2$, which can be rewritten as
\begin{equation*}
-\frac{\rho'(r)}{\rho(r)^2} > \frac{1}{8}.
\end{equation*}
Integrating this inequality from $r$ to $r_2$ yields
\begin{equation*}
\frac{1}{\rho(r_2)} - \frac{1}{\rho(r)} > \frac{1}{8}(r_2 - r).
\end{equation*}
Letting $r\searrow-\infty$, we have a contradiction.
\\

\noindent 
(II)-(b) Assume that $\rho'>0$ on $\mathbb{R}$.
Since $\rho'>0$ and $\rho''>0$, we have $\rho\nearrow+\infty$ as $r\nearrow+\infty.$ We show $\rho'\nearrow-\lambda$ as $r\nearrow+\infty.$ If $\rho'\nearrow c(<-\lambda)$ as $r\nearrow +\infty$, then by \eqref{1} one has $\rho''\nearrow +\infty$ as $r\nearrow +\infty$. 
Since $\rho'<-\lambda$, we have a contradiction. 
Hence, we obtain $\rho'\nearrow -\lambda$ as $r\nearrow +\infty$. Therefore, $M$ is asymptotically flat.

Set $s:=-r$. Then, we have $\dot\rho(s)=-\rho'(r)<0$ and $\ddot\rho(s)>0$ $(s\in\mathbb{R}).$ 
Since $\lambda<\dot\rho<0$ and $\ddot\rho>0$, we have $\dot\rho\nearrow\beta$ as $s\nearrow+\infty$ for some constant $\beta\leq0$. If $\beta\not=0$, then since $\rho>0$, we have a contradiction. Hence, we have $\dot\rho\nearrow0$ as $s\nearrow+\infty$. Therefore, we have $\rho'\searrow0$ as $r\searrow-\infty$, that is, $R\searrow\lambda$ as $r\searrow-\infty.$
\\

\noindent
{\bf Case (1) of Theorem \ref{CSZ}.}
\\

In this case, we have two cases. (A) $\rho>0$ on $(0,+\infty)$, $\rho(0)=0$, $\rho'(0)=1$, and $\rho^{(\text{even})}(0)=0$, or (B) $\rho<0$ on $(0,+\infty)$, $\rho(0)=0$, $\rho'(0)=-1$, and $\rho^{(\text{even})}(0)=0$ (cf. Sections 1.4.4 and 4.3.4 in \cite{Petersen16})
\\

\noindent
{\bf Case 1 $\lambda=0$.}

\noindent
(A) $\rho>0$ on $(0,+\infty)$.

In this case, it is easy to see that the solutions to \eqref{1} are
\[
\rho(r)=2c_1^2\tanh\left\{\frac{1}{2}(c_1^2 r+c_1^2c_2 )\right\},
\]
where $c_1$ and $c_2$ are constants.
Since $\rho(0)=0,$ one has 
\[
c_1=0~\text{or}~c_2=0.
\]

If $c_1=0$, then one has $\rho(r)\equiv0$, which is a contradiction.

If $c_2=0$, then one has
\[
F(r)=4\log\left(\cosh\left(\frac{c_1^2}{2}r\right)\right)+C,
\]
where $C$ is a constant. Since $\rho'(0)=1$, we have $c_1^2=1$ and 
\[
F(r)=4\log\left(\cosh\left(\frac{r}{2}\right)\right)+C.
\]
It is Hamilton's cigar soliton.

By the same argument, Case (B) cannot occur.
\\

\noindent
{\bf Case 2 $\lambda>0$.}
\\

\noindent
(A) $\rho>0$ on $(0,+\infty)$.
\\

\noindent
(A)-(I) When $R>0$. Since $\rho'>-\lambda$ and $\rho''<0$, $\rho'\searrow \alpha$ as $r\nearrow +\infty$, for some constant $\alpha$. Assume that $\alpha<0$. Since $\rho>0$, we have a contradiction. Hence, $\alpha\geq0$. Therefore, $\rho'>0$ on $(0,+\infty)$.

Assume that $\rho\nearrow c^2$ as $r\nearrow+\infty$ for some constant $c$. Then one has $\rho'\searrow0$ as $r\nearrow +\infty$.
Hence, by \eqref{1}, $\rho''$ tends to $-\frac{\lambda}{2}c^2$ as $r\nearrow+\infty$. Since $\rho'>0$, we have a contradiction. Therefore, $\rho\nearrow +\infty$ as $r\nearrow+\infty.$ 
By \eqref{1}, $\rho''\searrow -\infty$ as $r\nearrow +\infty$, which is a contradiction, because $\rho'>0$ on $(0,+\infty)$.
\\

\noindent
(A)-(II) When $R<0$. Since $\rho'<-\lambda$ and $\rho>0$, we have a contradiction.
\\

\noindent
(B) $\rho<0$ on $(0,+\infty)$.
\\

\noindent
(B)-(I) When $R>0$. 
Since $\rho'>-\lambda$ and \eqref{1}, we have $\rho''>0$ on $(0,+\infty)$. Hence, $\rho'$ is monotonically increasing. Since $\rho<0$, there exists no point in $(0,+\infty)$ at which $\rho'=0$. Thus, $\rho'<0$ on $(0,+\infty)$. By \eqref{3}, $\rho'''>0$ on $(0,+\infty)$, which yields contradiction, because $\rho<0$ on $(0,+\infty)$.
\\

\noindent
(B)-(II) When $R<0$. Since $\rho'<-\lambda<0$ and \eqref{1}, we have $\rho''<0$ on $(0,+\infty)$. Hence, $\rho'$ is monotonically decreasing and $\rho\searrow-\infty$ as $r\nearrow+\infty$. By \eqref{3}, we also have $\rho'''<0$ on $(0,+\infty)$, which yields $\rho'\searrow-\infty$ as $r\nearrow+\infty$. By \eqref{1}, we have 
\[
2R'+\rho R=0.
\]
From this, $R<0$, and $\rho'<0$, we have
\[
2(R'-\lambda(\ln(-R))')=-\frac{1}{2}(\rho^2)'.
\]
Integrating both sides, we obtain
\[
2R-2\lambda\ln(-R)=-\frac{1}{2}\rho^2+C,
\]
where $C$ is a constant. Since $-R\nearrow+\infty$ as $r\nearrow+\infty$, for sufficiently large $\tilde r$, we have
\[
\ln(-R)<-R,
\]
on $(\tilde r,+\infty)$, which yields 
\[
2R+2\lambda R<-\frac{1}{2}\rho^2+C,
\]
on $(\tilde r,+\infty)$. Since $R=\rho'+\lambda$, the inequality is equivalent to
\[
\rho'<-\frac{1}{4(1+\lambda)}\rho^2+C,
\]
on $(\tilde r,+\infty)$. Since $-\rho^2\searrow-\infty$, for sufficiently large $\tilde r_1>\tilde r$, we have
\[
\rho'<-\frac{1}{100(1+\lambda)}\rho^2,
\]
on $(\tilde r_1,+\infty)$. Hence, we have
\[
(-\rho^{-1})'<-\frac{1}{100(1+\lambda)},
\]
on $(\tilde r_1,+\infty)$. Integrating both sides, we obtain
\[
-\rho^{-1}(r)+\rho^{-1}(\tilde r_1)<-\frac{1}{100(1+\lambda)}(r-\tilde r_1).
\]
Taking $r\nearrow+\infty$, we have a contradiction.
\\

\noindent
{\bf Case 3 $\lambda<0$.}
\\

\noindent
(A) $\rho>0$ on $(0,+\infty)$.
\\

\noindent
(A)-(I) When $R>0$. We have $\rho'>-\lambda>0$. By $\eqref{1}$, we have $\rho''<0$. 
Note that $\rho(0)=0$ and $\rho'(0)=1$. 
Hence, $-1<\lambda$ must be satisfied. By integrating both sides of $\rho'>-\lambda,$ one has 
$-\lambda r<\rho(r)$. Therefore, $\rho\nearrow +\infty$ as $r\nearrow +\infty$. Since $\rho'>-\lambda$ and $\rho''<0$, we have $\rho'\searrow\alpha(\geq-\lambda)$ as $r\nearrow+\infty$ for some constant $\alpha$. If $\alpha\not=-\lambda$, by \eqref{1}, we have that $\rho''$ tends to $-\infty$ as $r\nearrow+\infty$, which is a contradiction, because $\rho'>-\lambda$. Therefore, $\rho'\searrow-\lambda$ as $r\nearrow+\infty$, that is, $R\searrow0$ as $r\nearrow+\infty$.
\\
 
\noindent
(A)-(II) When $R<0$. We have $\rho'<-\lambda$. By \eqref{1}, we have $\rho''>0$. Since $\rho'(0)=1$, we have $\rho'>1$ on $(0,+\infty)$. Hence, $-1>\lambda$ must be satisfied. 
Since $\rho$ is a monotonically increasing convex function, $\rho\nearrow+\infty$ as $r\nearrow+\infty$. 
If $\rho'\nearrow c(<-\lambda)$ as $r\nearrow +\infty$, then by \eqref{1}, one has $\rho''\nearrow +\infty$ as $r\nearrow +\infty$. Since $\rho'<-\lambda$, we have a contradiction. 
Therefore, $\rho'\nearrow -\lambda$ as $r\nearrow +\infty$, and $M$ is asymptotically flat.
\\

\noindent
(B) $\rho<0$ on $(0,+\infty)$.
\\

\noindent
(B)-(I) When $R>0$. In this case, $\rho'>-\lambda>0$, which yields a contradiction, because $\rho'(0)=-1$.
\\

\noindent
(B)-(II) When $R<0$. We have $\rho'<-\lambda$. By \eqref{1}, we have $\rho''<0$. Since $\rho'(0)=-1$, $\rho'<-1$ holds. Integrating both sides of \eqref{1} from $0$ to $r$ yields 
\[
2\rho'+2+\frac{1}{2}\rho^2+\int_0^r\lambda\rho(s)ds=0.
\] 
Since $\lambda\rho>0$ on $(0,+\infty)$, we have
\[
2\rho'+\frac{1}{2}\rho^2<0.
\]
Hence, one has
\[
(-\rho^{-1})'<-\frac{1}{4}.
\]
Integrating this inequality from $\tilde r(>0)$ to $r$, we have
\[
-\rho^{-1}(r)+\rho^{-1}(\tilde r)<-\frac{1}{4}(r-\tilde r),
\]
which yields a contradiction, because $\rho<0$ on $(0,+\infty)$.
\end{proof}

\begin{remark}
For two-dimensional solitons, Theorem \ref{main2D} solves many problems (see for example Page 51 of \cite{Chowetal07}).
\end{remark}

%\noindent
%{\bf Acknowledgements.}~\\

%~\\

%\noindent
% \textbf{Data availability statement} ~\\
%Data sharing not applicable to this article
%as no datasets were generated or analysed during the current study.
%~\\

%\noindent
%{\bf Conflict of interest}~\\
%There is no conflict of interest in the manuscript.

%%%%%%%%%%%%%%%%%%%%%%%%%%%%%%%%%%%
%%%%%%%%%%%%%%%%%%%%%%%%%%%%%%%%%%%
%%%%%%%%%%%%%%%%%%%%%%%%%%%%%%%%%%%

\bibliographystyle{amsbook}

\begin{thebibliography}{99} 


\bibitem{BM15}
J. Bernstein and T. Mettler,
{\it Two-Dimensional Gradient Ricci Solitons Revisited},
Int. Math. Res Not., {\bf 2015}, 2015, 78-98.

\bibitem{Brendle05}
S. Brendle,
{\it Convergence of the Yamabe flow for arbitrary initial energy}, 
J. Differential Geom. (2005) {\bf 69}, 217--278.

\bibitem{Brendle07}
S. Brendle, 
{\it Convergence of the Yamabe flow in dimension 6 and higher}, 
Invent. Math. (2007) {\bf 170}, 541--576.


\bibitem{Brendle13}
S. Brendle, 
{\it Rotational symmetry of self-similar solutions to the Ricci flow},
Invent. Math. (2013) {\bf 194}, 731--764.

\bibitem{Brendle14}
S. Brendle,
{\it Rotational symmetry of Ricci solitons in higher dimensions},
J. Differential Geom. (2014) {\bf 97}, 191--214.

%\bibitem{BS08}
%S. Brendle and R. Schoen
%{\it Manifolds with 1/4-pinched curvature are space forms},
%J Amer. Math. Soc. (2008) {\bf 22}, 287--307.

\bibitem{CSZ12}
H.-D. Cao, X. Sun and Y. Zhang, 
{\it On the structure of gradient Yamabe solitons,}
Math. Res. Lett. (2012) {\bf 19}, 767--774.

\bibitem{Chowetal07}
B. Chow, S.-C. Chu, D. Glickenstein, C. Guenther, J. Isenberg, T. Ivey, D. Knopf, P. Lu, F. Luo and L. Ni,
{\it The Ricci Flow: Techniques and Applications: Part I: Geometric Aspects},
Math. Surv. and Mono., Amer. Math. Soc., (2007), {\bf 135}.

\bibitem{CMM12}
G. Catino, C. Mantegazza and L. Mazzieri, 
{\it On the global structure of conformal gradient solitons with nonnegative Ricci tensor},
Commun. Contemp. Math. (2012) {\bf 14}, 12pp.

\bibitem{CC96}
J. Cheeger and T. H. Colding, 
{\it Lower bounds on Ricci curvature and the almost rigidity of warped products,}
Ann. of Math. (1996) {\bf 144}, 189--237.

\bibitem{DS13}
P. Daskalopoulos and N. Sesum, 
\textit{The classification of locally conformally flat Yamabe solitons,}
Adv. Math. (2013) {\bf 240}, 346--369.

\bibitem{Hamilton89}
R. Hamilton, 
{\it Lectures on geometric flows},
(1989), unpublished.

\bibitem{He11}
C. He, \textit{Gradient Yamabe solitons on warped products},
arXiv:1109.2343 [math.DG] (2011).

\bibitem{Maeta21}
S. Maeta, \textit{Classification of generalized Yamabe solitons under vanishing conditions on the Weyl, Cotton, and Cao-Chen tensors},
arXiv:2107.05487 [math.DG] (2021).

\bibitem{Maeta28}
S. Maeta,
{\it Rotational symmetry of complete shrinking gradient Yamabe solitons},
arXiv:2309.09166[math DG].

\bibitem{Perelman1}
G. Perelman, 
{\it The entropy formula for the Ricci flow and its geometric applications,}
arXiv math.DG/0211159, (2002).

\bibitem{Petersen16}
P. Petersen,
{\it Riemannian Geometry. Third edition,}
Graduate Texts in Mathematics, 171. Springer (2016).

\bibitem{Tashiro65}
Y. Tashiro, {\it Complete Riemannian manifolds and some vector fields},
Trans. Amer. math. Soc., {\bf 117} (1965), 251--275.

\end{thebibliography}

\end{document}